\font\chuto=cmbx10 at 16pt \font\kamy=lcmssb8
\font\kam=lcmss8 at 8pt 
\date{}
\newtheorem{theorem}{Theorem}[section]
\newtheorem{lemma}[theorem]{Lemma}
\newtheorem{thm}{Theorem}[section]
\numberwithin{equation}{section}
\begin{document}
	\setlength{\unitlength}{1cm}
	
	\vskip1.5cm
	% ---------------------TITLE----------------------------------
	
	\centerline {\bf \chuto Fractional difference sequence spaces via Pascal mean}

	% ---------------------NAME---------------------------------
	
	\vskip.8cm \centerline {\kamy  Salila Dutta $^{\dag,}$ and Diptimayee Jena$^\ddag$ }
	
	\vskip.5cm
	% Address 1 -------------------------------------------------------------------
	\centerline {$^\dag$Department of Mathematics, Utkal University, Vani Vihar} 
	\centerline {Bhubaneswar, India}
	\centerline {e-mail : {\tt saliladutta516@gmail.com}}
	\vskip.1cm
	%% Address 2-----------------------------------------------------------------
	
	\centerline {$^\ddag$Department of Mathematics, Utkal University, Vanivihar, India} 
	\centerline {e-mail : {\tt jena.deeptimayee@gmail.com }}

	% ABSTRACT----------------------------------------------------------------
	
	\vskip.5cm \noindent{\small{\bf Abstract :} The main purpose 
		of this article is to introduce Pascal difference sequence spaces of fractional order $ \tau $ over the sequence space $\ell_p$ and $\ell_\infty$. Some topological properties of these spaces are considered here along with the Schauder basis,  ${\alpha} -,\beta -$ and $\gamma -$duals of the spaces.\\}

		\vskip0.3cm\noindent {\bf Keywords :} difference operator
		$\Delta^{\tau}$; Schauder basis, Pascal mean, dual spaces, sequence spaces.
		
		\noindent{\bf 2010 Mathematics Subject Classification :} 47A10; 40A05; 46A45.
		
		\noindent \hrulefill
		% ---------------------Body---------------------------------

		\section{Introduction, Preliminaries and Definitions}
		\hskip0.6cm
		Let $ \tau \in \mathbb{R}- \left( {\mathbb{Z}^{-} \bigcup \left\lbrace 0\right\rbrace }\right)  $ where $ \mathbb{R}= $ Set of all real numbers and $ \mathbb{Z}^{-}= $ set of negative integers.	For all real number $ \tau $ ,the Euler gamma function  $\Gamma\left({\tau} \right)$ is defined as 
		\begin{equation}
			\Gamma\left({\tau} \right)=\displaystyle\int_0^\infty e^{-t} \ t^{\tau-1}  \, dt ,
		\end{equation}  which is an improper integral and satisfies the following properties: 
		
		1. $\Gamma\left({n+1} \right)=n!$,  $ n \in \mathbb{N}$ ,( $\mathbb{N} $ is set of natural numbers).
		
		2. $\Gamma\left({\tau+1} \right)= \tau \Gamma\left({\tau} \right)$
		for each real number $\tau \not\in \{0,-1,-2,-3,....\}$.\
		
		\noindent It is obvious by previous knowledge based on sequence spaces the space of all real or complex valued sequences are denoted by $\omega $. Any subspace of $\omega$ is called a sequence
		space. The spaces $c_{0} ,c $ and $l_{\infty }$  are of null, convergent and bounded sequences respectively.\ Also by  $\ell_p$ , we mean the space of absolutely p-summable series, for $ 1 \leq p< \infty $ with the norm $\parallel x \parallel_{\ell_p} = {\left( \sum_{k} {\|x_{k}\|^p}\right)}^{1/p} $, which is a BK- space.  \\
		Let $ M$ and $N$ be two sequence spaces and $ A=\left( {a_{nk}}\right)  $ an infinite matrix of real or comlpex elements. We write  $ Ax = {\left\lbrace A_n (x)\right\rbrace }_{n=0}^{\infty} \in N$ if $ { A_n (x)} = \sum_{k} {a_{nk} x_{k}}$ converges for each $ n $. if  $ x=(x_{k}) \in M $ implies that $ Ax \in N $. Then we say $ A $ defines a matrix mapping from $M$ into $N$ and we denote it be $ A:M \rightarrow N $. And the sequence $ Ax $ is called $ A - $transform of $ x $. 
	The notion of difference sequence spaces $ X\left( \Delta \right)  $ for $ X = \left\lbrace {\ell_\infty} , {c_o} , c \right\rbrace $ was introduced by Kizmaz [\cite{kizm15} ], generalized by Et and \c{C}olak [\cite{et13}] and many others [ see \cite{basa6,madx1, mur1,kada14}]. For a proper fraction ${\tau}$, the concept of fractional difference operator $ \Delta ^{\left( {\tau}\right) } $ was introduced by (Baliarsingh [\cite{bali6}]), Baliarsingh and Dutta [\cite{bali4,bali5,bali7,dutt10,dutt11}] and the difference operator $\Delta ^{\left( {\tau}\right) } $ as
	\begin{equation} \label{1.2} 
		\Delta ^{\left( {\tau}\right)  } x_{k} =\sum _{i}\left(-1\right)^{i} \frac{\Gamma \left({{\tau} }+1\right)}{i!\Gamma ({{\tau} }-i+1)} x_{k-i}   
	\end{equation} 
	
	\begin{equation} \label{1.3} 
		\Delta ^{-\left( {\tau}\right) } x_{k} =\sum _{i}\left(-1\right)^{i} \frac{\Gamma \left({{-\tau} }+1\right)}{i!\Gamma ({-{\tau} }-i+1)} x_{k-i}   
	\end{equation}   
	
	The series of fractional difference operators are convergent. It is also appropriate to express the difference operator $ \left(\Delta ^{\left( {\tau}\right)  }\right)$ and its inverse $ \left(\Delta ^{\left( {-\tau}\right)}\right)  $ respectively, as triangles by :\
	
	\[\left(\Delta ^{\left( {\tau}\right)  } \right)_{nk} =\left\{\begin{array}{l} {\left(-1\right)^{n-k} \frac{\Gamma \left({{\tau} }+1\right)}{\left(n-k\right)!\Gamma ({{\tau} }-n+k+1)} \qquad if\, 0\le k\le n} \\ {0 \qquad \qquad \qquad \qquad \qquad \qquad \qquad if\, k>n} \end{array}\right. \] \
	\begin{equation}\label{1.6}
	\left(\Delta ^{\left( {-\tau}\right)  } \right)_{nk} =\left\{\begin{array}{l} {\left(-1\right)^{n-k} \frac{\Gamma \left({{-\tau} }+1\right)}{\left(n-k\right)!\Gamma ({-{\tau} }-n+k+1)} \qquad if\, 0\le k\le n} \\ {0 \qquad \qquad \qquad \qquad \qquad \qquad \qquad if\, k>n} \end{array}\right. 	
\end{equation}
	 \	The Pascal mean $ P $ [\cite{polat1}] which is described as Pascal matrix $ \left[ p_{nk}\right] $ is defined by \
		\[ {P} = \left[ p_{nk}\right] =\left\{\begin{array}{l} {\left(\begin{array}{l} {n} \\ {n-k} \end{array}\right) \qquad  \qquad if\, 0\le k\le n} \\ {0 \qquad \qquad \qquad \qquad \qquad if\, k>n} \end{array}\right. \] \
		\noindent The inverse of Pascal matrix $ P^{-1} = \left[ p_{nk}\right] ^{-1}  $ is defined by \
\begin{equation}\label{1.7}	
	 P^{-1} =\left\{\begin{array}{l} {\left(-1\right)^{n-k} {\left(\begin{array}{l} {n} \\ {n-k} \end{array}\right)} \qquad if\, 0\le k\le n} \\ {0\qquad \qquad \qquad \qquad  \qquad \qquad if\, k>n} \end{array}\right.  \
\end{equation}	
   Pascal sequence spaces $ {p_\infty} \ , {p_0} \ , and \, {p_c} $ are introduced by Polat[\cite{polat1}] are as follows: \
    \[\begin{array}{l} {p_{0} =\left\{\zeta=\left(\zeta_{k} \right)\in \omega :{\mathop{\lim }\limits_{n\to \infty }} \sum_{k=0}^{n} {\left(\begin{array}{l} {n} \\ {n-k} \end{array}\right)} \zeta_{k} =0 \right\}}, \\ 
   	{p_{c} =\left\{\zeta=\left(\zeta_{k} \right)\in \omega :{\mathop{\lim }\limits_{n\to \infty }} \sum_{k=0}^{n} {\left(\begin{array}{l} {n} \\ {n-k} \end{array}\right)} \zeta_{k} \  exists \right\}}, \\ 
   	{p_{\infty } =\left\{\zeta=\left(\zeta_{k} \right)\in \omega :{\mathop{\sup }\limits_{n}} {\left| \sum_{k=0}^{n} {\left(\begin{array}{l} {n} \\ {n-k} \end{array}\right)}\zeta_{k} \right|} <\infty \right\}} \end{array}\]  \
   Aydin and Polat [\cite{polat3}] introduced Pascal difference sequence spaces of integer order $ m $ as, \
   \begin{equation}\label{1.1}
   \begin{array}{l} {X \left( \Delta^{\left( m \right) } \right)  =\left\{\zeta=\left(\zeta_{k} \right)\in \omega : \Delta^{\left( m \right) } \left( \zeta \right)  \in X \right\}},\\ 
   	\end{array} \
    \end{equation}
    where $  X=\left\lbrace {p_\infty} \ , {p_0}  \ , {p_c} \right\rbrace  $.\\
    Now our interest is to introduce Pascal difference sequence spaces of fractional order. Motivated by equation \ref{1.1} we introduce the spaces $ {\ell_p}{\left( \hat{P}\right) } $, for $ 1 \le p \le \infty $ . We prove certain topological properties of these spaces along with $ \alpha- , \beta- , \gamma- $ duals.\\
	
 \section{Main Results}

\noindent Here we introduce the matrix  $ \hat{P}= {{P} \left( \Delta^{\left( \tau \right)}\right)} $ by taking the product of Pascal matrix ${P}$ [\cite{polat1}] and fractional ordered difference operator  $ \Delta^{\left( \tau \right)}$ \cite{bali4}, as\
\begin{multline} \label{2.1} 
	\hat{P}=\left({{P}\left( \Delta ^{\left( \tau\right)}\right) } \right)_{nk} =\left\{\begin{array}{l} {\sum _{i=k}^{n}(-1)^{i-k} }  \left(\begin{array}{l} \ {n} \\ {n-i} \end{array}\right)\frac{\Gamma \left({\tau} +1\right)}{\left(i-k\right)!\Gamma \left({\tau} -i+k+1\right)} \  , { \qquad if\, 0\le k\le n } \\ {0,\qquad \qquad \qquad \qquad \qquad \qquad\qquad \qquad \qquad \qquad if\, k>n} \end{array}\right. 
\end{multline} 		
	Equivalently, one  may write, \
	\begin{multline}\label{*}
		\left({\bar{P}\left( \Delta ^{\left( \tau\right)}\right) } \right) =\
		\left(%
		\begin{array}{ccccccc}
			{1 } & 0 & 0 & 0 & \dots \\
			({2-\tau })  &  ({1})   & 0 & 0 & \dots  \\
			({3-3\tau + \frac{\tau{\left( \tau-1 \right) }}{2!}})  &  ({3-\tau })  &  ({1})  & 0&\dots  \\
			\vdots  & \vdots  & \vdots  &\vdots &\ddots
		\end{array}%
		\right)
	\end{multline}\\
\begin{thm}\label{thm2.1}
	The inverse of the product matrix $\hat{P}=\left({{P}\left( \Delta ^{\left( \tau\right)}\right) } \right) $ is given by
	$\hat{P}^{-1} =\\
	\left\{\begin{array}{l} { \sum _{j=k}^{n}\left(-1\right)^{n-k}  \left(\begin{array}{l} {j} \\  {j-k} \end{array}\right) \frac{\Gamma \left({-\tau} +1\right)}{\left(n-j\right)!\Gamma \left({-\tau}-n+j+1\right)}  , \qquad \qquad   if\, 0\le k\le n } \\ {0, \qquad \qquad \qquad \qquad \qquad \qquad \qquad \qquad  \qquad  \qquad \qquad  if\, k>n} \end{array}\right. $ 	
	\end{thm}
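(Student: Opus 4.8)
The plan is to verify directly that the proposed matrix, call it $Q = (q_{nk})$ with
\[
q_{nk} = \sum_{j=k}^{n} (-1)^{n-k} \binom{j}{j-k} \frac{\Gamma(-\tau+1)}{(n-j)!\,\Gamma(-\tau-n+j+1)}, \qquad 0 \le k \le n,
\]
is a two-sided inverse of $\hat P = P\,\Delta^{(\tau)}$. The key structural observation is that $\hat P$ is defined as a product of two lower-triangular triangles, $\hat P = P \cdot \Delta^{(\tau)}$ (this is exactly the content of \eqref{2.1}), and $Q$ is, by the very shape of the formula, a product of the same two factors in reverse with each factor inverted: the inner sum over $j$ carries the Pascal-inverse entries $(-1)^{n-k}\binom{j}{j-k}$ from \eqref{1.7} paired against the entries $\frac{\Gamma(-\tau+1)}{(n-j)!\,\Gamma(-\tau-n+j+1)}$ of $\Delta^{(-\tau)}$ from \eqref{1.6}. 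So the natural route is to show $Q = \Delta^{(-\tau)} P^{-1}$ and then invoke the elementary facts $P\,P^{-1} = I$, $\Delta^{(\tau)}\Delta^{(-\tau)} = I$.

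Concretely I would proceed in three steps. First, recognize that for lower-triangular infinite matrices the product is well defined entrywise (finite sums), so $\hat P$ is a triangle with $q_{nn}=1$, hence invertible, and its inverse is unique; it therefore suffices to exhibit \emph{one} matrix $Q$ with $\hat P Q = I$ (or $Q\hat P = I$). Second, write $Q = \Delta^{(-\tau)} P^{-1}$ explicitly: the $(n,k)$ entry is $\sum_{j} (\Delta^{(-\tau)})_{nj}(P^{-1})_{jk} = \sum_{j=k}^{n} (-1)^{n-j}\frac{\Gamma(-\tau+1)}{(n-j)!\,\Gamma(-\tau-n+j+1)} \cdot (-1)^{j-k}\binom{j}{j-k}$, and since $(-1)^{n-j}(-1)^{j-k} = (-1)^{n-k}$ this collapses to exactly the claimed formula for $q_{nk}$. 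Third, compute
\[
\hat P Q = \bigl(P\,\Delta^{(\tau)}\bigr)\bigl(\Delta^{(-\tau)} P^{-1}\bigr) = P\,\bigl(\Delta^{(\tau)}\Delta^{(-\tau)}\bigr)\,P^{-1} = P\,I\,P^{-1} = I,
\]
using associativity of multiplication of lower-triangular matrices (legitimate because all intermediate sums are finite) together with the two cited inversion identities.

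The one point that genuinely needs care — and which I expect to be the main obstacle — is justifying $\Delta^{(\tau)}\Delta^{(-\tau)} = I$ as an identity of infinite triangles, i.e. that the Gamma-function coefficients in \eqref{1.2}–\eqref{1.3} really are mutually inverse. This reduces to the combinatorial identity $\sum_{i=0}^{m} (-1)^{i}\binom{\tau}{i}\binom{-\tau}{m-i} = \delta_{m0}$, which is the Vandermonde/Chu convolution $\sum_{i}\binom{\tau}{i}\binom{-\tau}{m-i} = \binom{0}{m}$ after absorbing the sign $(-1)^i$ into $\binom{-\tau}{i}$; writing $\frac{\Gamma(\tau+1)}{i!\,\Gamma(\tau-i+1)} = \binom{\tau}{i}$ makes this transparent, and it is valid for all $\tau \notin \mathbb{Z}^- \cup \{0\}$ by the functional equation $\Gamma(\tau+1)=\tau\Gamma(\tau)$. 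Once that identity is in hand (it is standard, and is implicitly used wherever $\Delta^{(-\tau)}$ is called "the inverse" in the cited references), the rest of the argument is purely formal. Alternatively, if one prefers to avoid citing Vandermonde, one can verify $\hat P Q = I$ by a direct double-sum manipulation, interchanging the order of summation and collapsing via the same binomial convolution; this is the bare-hands version of the same computation and I would present it only as a fallback.
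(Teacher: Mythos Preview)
Your proposal is correct and follows exactly the approach the paper indicates: the paper's own proof is simply ``The proof is a routine verification using results \eqref{1.6} and \eqref{1.7} and hence omitted,'' which is precisely your factorization $Q=\Delta^{(-\tau)}P^{-1}$ together with $(P\Delta^{(\tau)})(\Delta^{(-\tau)}P^{-1})=I$. Your discussion of the Vandermonde convolution needed for $\Delta^{(\tau)}\Delta^{(-\tau)}=I$ supplies the detail the paper suppresses, but the strategy is identical.
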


\begin{proof}
	The proof is a routine verification using results \ref{1.6} and \ref{1.7} and hence omitted.
\end{proof}
\noindent Now we introduce the new Pascal difference sequence spaces of fractional order  $ {\ell_p}{\left( \hat{P} \right) } $, for $ 1 \le p < \infty $ and 
$ {\ell_\infty}{\left( \hat{P} \right) } $ as the set of all sequences such that $ {P}{\left( \Delta^{\left( \tau \right) }\right) }- $ transforms are in the spaces $ {\ell_p} $ and $ \ell_\infty $ respectively, that is \
\[\begin{array}{l} {{\ell_p}\left( {\hat{P}}\right)  =\left\{\zeta=\left(\zeta_{k} \right)\in \omega :\hat{P} \left( \zeta \right)  \in \ell_p \right\}},  1 \le p < \infty \ and \\ 
{{\ell_ \infty}{\left( \hat{P} \right) } =\left\{\zeta=\left(\zeta_{k} \right)\in \omega : \hat{P} \left( \zeta \right)  \in \ell_ \infty \right\}}	.\end{array}\]  \
Alternatively above spaces can be written as :
 \begin{equation} \label{2.6}
 {\ell_p}{\left( \hat{P} \right) }= \left( {\ell_p}\right) _ {\left( \hat{P} \right) } \ and \ {\ell_\infty}{\left( \hat{P} \right) }= \left( {\ell_\infty}\right) _ {\left( \hat{P} \right) } . 
 \end{equation}
Now our introduced sequence spaces generalize the following sequence spaces as follows:\\
${\left( i\right)}$. For ${ \tau ={0} }$ and $ p = \infty $ , it reduces to $ {\ell_\infty}{\left( P \right) } $, studied by Polat [\cite{polat1}].\\
${\left( ii\right)}$. For ${ \tau ={1} }$ and $ p = \infty $ , it reduces to $ {\ell_\infty}{\left( P{\left( \Delta \right) } \right) } $,  where $ \ {\Delta}= {x_k}-{x_{k-1}} $, studied by Aydin and Polat [\cite{polat2}].\\
${\left( iii\right)} $. For ${ \tau ={m} }$ and $ p = \infty $, it reduces to $ {\ell_\infty}{\left( P{\left( \Delta^{\left( m \right) } \right) } \right) } $, \ and $\ \Delta^{\left( m \right) }=\sum_{i=0}^{m} {-1}^{j} \left(\begin{array}{l} {m} \\  {j} \end{array}\right) {x_{m-j}}$ studied by Aydin and Polat [\cite{polat3}]. \\
\noindent  Now with $\hat{P}$ - transform of $x={\left(x_{k}\right)}$ we define the sequence $y=\left(y_{n} \right)$ , $  n \in N $ as follows :
\begin{equation} \label{3.1} 
	\begin{array}{l} {y_{n} =\left( \hat{P} x \right)_{n} } =\sum _{k=0}^{n}\sum _{i=k}^{n}\left(-1\right)^{i-k}  \left(\begin{array}{l} {n} \\ {n-i} \end{array}\right)\frac{\Gamma \left({\tau} +1\right)}{\left(i-k\right)!\Gamma \left({\tau} -i+k+1\right)} {x_{k}} \end{array} .
\end{equation}
\section{Topological structure} 
\noindent This section deals with certain topological results of the spaces $ {\ell_p}{\left( \hat{P} \right) } $ and $ {\ell_\infty}{\left( \hat{P} \right) } $.  
\begin{thm}\label{thm2.1}
	{The sequence spaces $ {\ell_p}{\left( \hat{P} \right) } $ and $ {\ell_\infty}{\left( \hat{P} \right) } $ are $ BK- $ spaces with the norm defined by }
	\begin{equation}\label{2.4}
	\parallel x \parallel_{{\ell_p}{\left( \hat{P} \right) }} = \parallel{\left( \hat{P} x \right) }\parallel_{\ell_p} = {\left( \sum_{k} {\left|  {\left( \hat{P} \right) }_k {x} \right| ^p}\right)}^{1/p} \, 1 \le p < \infty \ 
	\end{equation}
and
\begin{equation}\label{2.5}
	\parallel x \parallel_{{\ell_\infty}{\left( \hat{P} \right) }} = \parallel{\left( \hat{P} x \right) }\parallel_{\ell_\infty} =  \sup_{k} {\left|  {\left( \hat{P} \right) }_k {x} \right|}  \
\end{equation}
\end{thm}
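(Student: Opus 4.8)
The plan is to reduce everything to the fact that $\ell_p$ ($1\le p<\infty$) and $\ell_\infty$ are themselves $BK$-spaces, exploiting that $\hat P$ is a \emph{triangle}: from \eqref{2.1} and the display \eqref{*} one sees that $(\hat P)_{nk}=0$ for $k>n$ while the diagonal entry $(\hat P)_{nn}=\Gamma(\tau+1)/\Gamma(\tau+1)=1$ for every $n$, so $\hat P$ is lower triangular with unit diagonal. In particular each row of $\hat P$ is finitely supported, hence $\zeta\mapsto\hat P\zeta$ is a well-defined linear map on $\omega$, and $\ell_p(\hat P)=\{\zeta\in\omega:\hat P\zeta\in\ell_p\}$ and $\ell_\infty(\hat P)$ are linear subspaces of $\omega$. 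Using the explicit inverse $\hat P^{-1}$ computed above, $\hat P$ is bijective on $\omega$, so the map $T\colon\zeta\mapsto\hat P\zeta$ restricts to a linear bijection of $\ell_p(\hat P)$ onto $\ell_p$ (respectively of $\ell_\infty(\hat P)$ onto $\ell_\infty$), with inverse $\eta\mapsto\hat P^{-1}\eta$.

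Next I would verify that \eqref{2.4} and \eqref{2.5} define norms and that $T$ is then a surjective linear isometry. Homogeneity and the triangle inequality for $\|\zeta\|_{\ell_p(\hat P)}:=\|\hat P\zeta\|_{\ell_p}$ are inherited immediately from the norm of $\ell_p$ together with the linearity of $T$; for positive definiteness, $\|\zeta\|_{\ell_p(\hat P)}=0$ forces $\hat P\zeta=0$, whence $\zeta=\hat P^{-1}(\hat P\zeta)=0$ by injectivity of the triangle $\hat P$. The argument for $\ell_\infty(\hat P)$ is identical. Thus $T$ is linear, norm-preserving and onto.

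It remains to establish completeness and the continuity of the coordinate functionals. For completeness, let $(\zeta^{(m)})_m$ be Cauchy in $\ell_p(\hat P)$; since $T$ is an isometry, $(\hat P\zeta^{(m)})_m$ is Cauchy in $\ell_p$, hence converges to some $\eta\in\ell_p$ because $\ell_p$ is a Banach space. Setting $\zeta:=\hat P^{-1}\eta$ gives $\zeta\in\ell_p(\hat P)$ and $\|\zeta^{(m)}-\zeta\|_{\ell_p(\hat P)}=\|\hat P\zeta^{(m)}-\eta\|_{\ell_p}\to0$; the same works for $\ell_\infty(\hat P)$, so both spaces are Banach. For the $BK$-property, fix $k$ and note that, since $\hat P^{-1}$ is a triangle, $\zeta_k=\sum_{j=0}^{k}(\hat P^{-1})_{kj}\,(\hat P\zeta)_j$ is a finite linear combination of the functionals $\zeta\mapsto(\hat P\zeta)_j=(T\zeta)_j$; each of these is continuous on $\ell_p(\hat P)$ because $\ell_p$ is a $BK$-space and $T$ is an isometry, so $\zeta\mapsto\zeta_k$ is continuous as well. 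Hence $\ell_p(\hat P)$ and $\ell_\infty(\hat P)$ are $BK$-spaces with the norms \eqref{2.4}, \eqref{2.5}. The only point requiring genuine care is the verification that $\hat P$ is a triangle with unit diagonal (via \eqref{2.1}, \eqref{*}, and its inverse), since everything else is a transcription of the corresponding properties of $\ell_p$ and $\ell_\infty$ through the isometry $T$.
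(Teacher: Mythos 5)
Your proposal is correct and follows essentially the same route as the paper: both arguments rest on the observation that $\hat{P}$ is a triangle (unit diagonal, zero above the diagonal) and that the norm \eqref{2.4}/\eqref{2.5} transfers the $BK$-structure of $\ell_p$ and $\ell_\infty$ to their matrix domains. The only difference is that the paper disposes of this transfer in one line by citing Theorem 4.3.2 of Wilansky \cite{wil} (the matrix domain of a triangle in a $BK$-space is again a $BK$-space under the induced norm), whereas you prove that statement directly for $\hat{P}$ -- completeness via the isometry onto $\ell_p$, and coordinate continuity via the triangular inverse $\hat{P}^{-1}$ -- which amounts to reproving Wilansky's result in this special case.
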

\begin{proof}
The proof of linearity is a routine verification and hence omitted. The spaces $ \ell_p $ and $ \ell_\infty $ are BK spaces with their natural norms. Since equation (2.3) holds and $ \hat{P} $ is a triangular matrix, Theorem 4.3.2 of Wilansky [\cite{wil}] implies the fact that $ {\ell_p}{\left( \hat{P} \right) } $ and $ {\ell_\infty}{\left( \hat{P} \right) } $ are BK spaces.	
	\end{proof}
\begin{thm}\label{thm2.2}
	The sequence spaces $ {\ell_p}{\left( \hat{P} \right) } $ and $ {\ell_\infty}{\left( \hat{P} \right) } $ are linearly isomorphic to $ \ell_p $ and $ \ell_\infty $ respectively, for $ 1 \le p \le \infty  $.
	\end{thm}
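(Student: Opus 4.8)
The plan is to exhibit an explicit linear isometry between each space and its model space, so that the isomorphism is in fact an isometric isomorphism. For $1\le p\le\infty$ define $T\colon \ell_p(\hat P)\to \ell_p$ by $T(x)=\hat P x$, i.e. $T(x)=y=(y_n)$ with $y_n$ given by \eqref{3.1} (and the analogous map $\ell_\infty(\hat P)\to\ell_\infty$). Linearity of $T$ is immediate from the linearity of the matrix transform, so the only work is to prove that $T$ is a norm-preserving bijection.

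First I would dispose of the easy directions. By the very definition \eqref{2.6} of $\ell_p(\hat P)$ (resp. $\ell_\infty(\hat P)$), $x\in\ell_p(\hat P)$ means $\hat P x\in\ell_p$, so $T$ maps into $\ell_p$; and since $\hat P$ is a triangle with unit diagonal, as displayed in \eqref{*}, the equation $\hat P x=0$ forces $x=0$ by forward substitution on the coordinates, so $T$ is injective. The norm identity is then purely a matter of definition: for every $x\in\ell_p(\hat P)$,
\[
\|x\|_{\ell_p(\hat P)}=\|\hat P x\|_{\ell_p}=\|T(x)\|_{\ell_p},
\]
by \eqref{2.4}, and likewise $\|x\|_{\ell_\infty(\hat P)}=\sup_k|(\hat P x)_k|=\|T(x)\|_{\ell_\infty}$ by \eqref{2.5}. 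Hence $T$ is an isometric embedding.

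The remaining point, surjectivity, is where the content lies. Given $y=(y_n)\in\ell_p$, set $x=\hat P^{-1}y$ using the explicit formula for $\hat P^{-1}$ established in Theorem~\ref{thm2.1}: since $\hat P^{-1}$ is again a triangle, each coordinate
\[
x_k=\sum_{n=0}^{k}\bigl(\hat P^{-1}\bigr)_{kn}\,y_n
\]
is a \emph{finite} sum, so $x\in\omega$ is well defined with no convergence question. Then $\hat P x=\hat P(\hat P^{-1}y)=y$, where the interchange of the (per-coordinate finite) sums is legitimate precisely because both matrices are triangles, and the product $\hat P\hat P^{-1}=I$ is exactly the assertion of Theorem~\ref{thm2.1}. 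Thus $\hat P x=y\in\ell_p$, so $x\in\ell_p(\hat P)$ and $T(x)=y$; $T$ is onto. Combining the three steps, $T$ is a surjective linear isometry, hence $\ell_p(\hat P)\cong\ell_p$, and the argument for $p=\infty$ is identical with the supremum norm in place of the $p$-norm. The main obstacle, such as it is, is bookkeeping the triangularity carefully so that both the definition of $x=\hat P^{-1}y$ and the identity $\hat P\hat P^{-1}=I$ are valid coordinatewise; everything else is formal.
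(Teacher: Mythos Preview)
Your proof is correct and follows essentially the same approach as the paper: both define $T(x)=\hat P x$, establish injectivity from the triangularity of $\hat P$, and obtain surjectivity by applying the explicit inverse $\hat P^{-1}$ to an arbitrary $y\in\ell_p$. The only cosmetic difference is that the paper verifies $\hat P x=y$ (and hence the norm identity) by an explicit coordinate computation collapsing to a Kronecker delta, while you invoke the matrix identity $\hat P\hat P^{-1}=I$ together with the definition of the norm; these are the same argument in different dress.
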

 \begin{proof}
 	\noindent Now define a mapping $T: {\ell_p}{\left( \hat{P} \right)}\to \ell_p   $ by    $x\to y=Tx$.
 	\noindent Clearly , $T$ is a linear transformation. If $Tx=\theta \ then \ x=\theta $, where $ \theta= \left( 0 \, , 0 \, , 0 \, ,. ..... \right)  $ ,  so  $T$ is one-one. \
 	Let $y\in \ell_p $ , define a sequence $x=\left(x_{k} \right)$  as
 	\begin{equation}\label{2.8}
 	x_{k} =\sum _{j=0}^{k} \sum _{i=j}^{k}\left(-1\right)^{k-j}  \left(\begin{array}{l} {i} \\ {i-j} \end{array}\right)\frac{\Gamma \left(-{\tau} +1\right)}{\left(k-i\right)!\Gamma \left(-{\tau} -k+i+1\right)}  y_{j} , (k \in N ) 
 	\end{equation}
 	Then \[ \parallel x \parallel_{{\ell_p}{\left( \hat{P} \right) }} 
 	= \left( {\sum_{k}} \left| \sum _{j=0}^{k}\sum _{i=j}^{k}\left(-1\right)^{i-j}  \left(\begin{array}{l} {k} \\ {k-i} \end{array}\right)\frac{\Gamma \left({\tau} +1\right)}{\left(i-j\right)!\Gamma \left({\tau} -i+j+1\right)} {x_{j}} \right|^{p} \right) ^{\frac{1}{p}} \] 
 	\[ = \left(  \mathop{\sum }\limits_{k} {\left| {\delta_{kj} \ y_{j}}\right|}^{p}\right) ^{\frac{1}{p}} =\left( {\mathop{\sum }\limits_{k}} \left| y_{k} \right|^{p} \right) ^{\frac{1}{p}} \ < \infty \]
 	\[ where \  {\delta_{kj}} = \left\lbrace  \begin{array}{l} { 1 , \qquad if\, k = j } \\ { 0 , \qquad if\, k \neq j } \end{array}\right. \]  
 	Thus, $ x \in  {\ell_p}{\left( \hat{P} \right) } $ and $T$ is a linear bijection and norm preserving. Hence the spaces $  {\ell_p}{\left( \hat{P} \right)}  $ and $ \ell_p $ are linearly isomorphic.
 	\noindent i.e.  $ {\ell_p}{\left( \hat{P} \right) } \cong \ell_p $. The proof for other spaces can be obtained in a similar manner.
 \end{proof}
\begin{thm}
	For $ 1\le p< \infty $ the space $ {\ell_p}{\left( \hat{P} \right) } $ is not a Hilbert space for $ p \neq 2 $ .
\end{thm}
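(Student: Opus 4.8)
The plan is to exploit the Jordan--von Neumann theorem: a normed linear space is an inner product space (hence, when complete, a Hilbert space) if and only if its norm satisfies the parallelogram law $\|u+v\|^{2}+\|u-v\|^{2}=2\|u\|^{2}+2\|v\|^{2}$. Thus it suffices to produce two vectors of $\ell_p(\hat P)$ for which this identity fails whenever $p\neq 2$.

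First I would invoke Theorem \ref{thm2.2}, which supplies a norm-preserving linear bijection $T:\ell_p(\hat P)\to\ell_p$, $x\mapsto y=\hat P x$. Because $T$ is onto, I may choose the sequences $x,z\in\ell_p(\hat P)$ that are the preimages (given explicitly by formula \eqref{2.8}) of the first two coordinate vectors $e^{(1)}=(1,0,0,\dots)$ and $e^{(2)}=(0,1,0,\dots)$ of $\ell_p$. Since $\hat P$ is linear, $\hat P(x\pm z)=e^{(1)}\pm e^{(2)}$, and norm preservation gives $\|x\|_{\ell_p(\hat P)}=\|z\|_{\ell_p(\hat P)}=\|e^{(1)}\|_{\ell_p}=1$, while $\|x\pm z\|_{\ell_p(\hat P)}=\|e^{(1)}\pm e^{(2)}\|_{\ell_p}=2^{1/p}$.

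Next I would substitute these values into the parallelogram law. The left-hand side equals $2\cdot 2^{2/p}=2^{1+2/p}$ and the right-hand side equals $2+2=4=2^{2}$; these coincide precisely when $1+2/p=2$, i.e.\ $p=2$. Consequently, for every $p\in[1,2)\cup(2,\infty)$ the norm \eqref{2.4} violates the parallelogram identity, so it is not induced by any inner product, and therefore $\ell_p(\hat P)$ is not a Hilbert space for $p\neq 2$.

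I do not anticipate a genuine obstacle; the single point that deserves a line of justification is that the chosen $x$ and $z$ actually lie in $\ell_p(\hat P)$, which is exactly the surjectivity half of Theorem \ref{thm2.2}, with the explicit preimages furnished by \eqref{2.8}. One could instead avoid $T$ entirely and write $x$ and $z$ down coordinatewise from the inverse matrix $\hat P^{-1}$ of Theorem \ref{thm2.1}, but routing the computation through the isometry keeps the arithmetic trivial and makes the argument uniform in the fractional parameter $\tau$.
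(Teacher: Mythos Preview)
Your argument is correct and follows essentially the same route as the paper: both invoke the Jordan--von Neumann criterion and test the parallelogram law on preimages under $\hat P$ of two simple finitely supported sequences in $\ell_p$. The only cosmetic difference is the choice of test vectors---the paper pulls back $(1,1,0,\dots)$ and $(1,-1,0,\dots)$ whereas you pull back $e^{(1)}$ and $e^{(2)}$---and these pairs are related by $u=x+z$, $v=x-z$, so the two computations are the same with the roles of the two sides of the identity interchanged.
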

\begin{proof}
	Here we prove that  $ {\ell_2}{\left( \hat{P} \right) } $ is the only Hilbert space in  $ {\ell_p}{\left( \hat{P} \right) } $ for $ 1\le p < \infty $. Since $ {\ell_p}{\left( \hat{P} \right) } $ is a BK-space for $ 1\le p< \infty $, so ${\ell_2}{\left( \hat{P}\right) } $ is also a BK-space with the norm \[  \parallel x \parallel_{{\ell_2}{\left( \hat{P} \right) }} \, = \parallel{\left( \hat{P} \right) }\parallel_{\ell_2} = {\left( \sum_{k} {\left|  {\left( \hat{P} \right) }_k {x} \right| ^2}\right)}^{1/2}\] \[={ \left\langle {{\left( \hat{P} \right) }x},{{\left( \hat{P} \right) }x}\right\rangle }^{1/2} \] \\
	for every $ x \in {\ell_2}{\left( \hat{P} \right) } $, we conclude that $ {\ell_2}{\left( \hat{P} \right) } $ is a Hilbert space. \\
	\noindent Let us consider two sequences $ u=(u_{k})=\left( 1 \, , \tau-1 \, , \frac{\tau(9-\tau)}{2}-6 \, , ....... \right)  $ and  $ v=(v_{k})=\left( 1 \, , \tau-4 \, , \frac{\tau(5-\tau)}{2} \, , ....... \right) $ then we have $ {{\left( \hat{P} \right) }u} = \left( 1 \, , 1 \, , 0 \, , 0 ...... \right) $ and $ {{\left( \hat{P} \right) }v}= \left( 1 \, , -1 \, ,  0 \, , 0  ...... \right)  $. Thus, it can be verify by the parallelogram law that is , \\
	L.H.S. = $ {\parallel u+v \parallel}^{2}_{{\ell_p}{\left( \hat{P} \right) }} + {\parallel u-v \parallel}^{2}_{{\ell_p}{\left( \hat{P} \right) }} = 8$ \\
	R.H.S. =$ 2\left( {\parallel u \parallel}^{2}_{{\ell_p}{\left( \hat{P} \right) }} + {\parallel v \parallel}^{2}_{{\ell_p}{\left( \hat{P} \right) }} \right)=4\left( 2^{2/p} \right) .\\ $
	For $ \,  p \neq 2 $, L.H.S. $\neq$ R.H.S. but for $ \,  p = 2 $, L.H.S. $=$ R.H.S. .\\
	 Hence  $ {\ell_p}{\left( \hat{P} \right) }$ is not a Hilbert space for $ p\neq 2 $ .
\end{proof}
	\begin{thm}
		The space  $ {\ell_p}{\left( \hat{P} \right) } $ for $ 1\le p< \infty $ is non-absolute type.
	\end{thm}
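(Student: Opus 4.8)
Recall that a normed sequence space $X\subseteq\omega$ is of \emph{absolute type} when membership $\zeta\in X$ is equivalent to $|\zeta|:=(|\zeta_k|)\in X$ together with $\|\zeta\|_X=\||\zeta|\|_X$ for every such $\zeta$; hence it suffices to produce a single $\zeta\in{\ell_p}{\left(\hat P\right)}$ with $\|\zeta\|_{{\ell_p}{\left(\hat P\right)}}\neq\||\zeta|\|_{{\ell_p}{\left(\hat P\right)}}$, the right-hand side being understood as $+\infty$ when $|\zeta|\notin{\ell_p}{\left(\hat P\right)}$. The plan is to build such a $\zeta$ with a \emph{prescribed} $\hat P$-norm by pulling a very simple $\ell_p$-sequence back through the isometric isomorphism $T:{\ell_p}{\left(\hat P\right)}\to\ell_p$, $\zeta\mapsto\hat P\zeta$, of Theorem~\ref{thm2.2}, and then to examine $\zeta$ and $|\zeta|$ coordinatewise.

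Concretely I would take for $\zeta$ the sequence associated by formula \eqref{2.8} to $y=(1,0,0,\dots)\in\ell_p$, so that $\|\zeta\|_{{\ell_p}{\left(\hat P\right)}}=\|y\|_{\ell_p}=1$ by Theorem~\ref{thm2.2}. Substituting $y=(1,0,0,\dots)$ into \eqref{2.8} and using $\Gamma(-\tau+1)/\bigl(m!\,\Gamma(-\tau-m+1)\bigr)=\binom{-\tau}{m}$ to collapse the $\Gamma$-quotients yields the closed form $\zeta_n=(-1)^n\sum_{m=0}^{n}\binom{-\tau}{m}$. For $\tau<1$ the partial sums $s_n:=\sum_{m=0}^{n}\binom{-\tau}{m}$ converge, by Abel's theorem for the binomial series $(1+t)^{-\tau}=\sum_m\binom{-\tau}{m}t^m$ at $t=1$, to $2^{-\tau}\neq0$; consequently $|\zeta_n|=|s_n|\to2^{-\tau}$, and $\zeta_n>0$ precisely for $n$ even once $n$ is large.

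The crucial step is then to show $|\zeta|\notin{\ell_p}{\left(\hat P\right)}$. If $|\zeta|$ lay in the space, linearity of ${\ell_p}{\left(\hat P\right)}$ would force $w:=|\zeta|-\zeta\in{\ell_p}{\left(\hat P\right)}$; but, by the previous paragraph, outside a finite set of indices $w$ equals a bounded sequence which vanishes on even indices and tends to $2^{1-\tau}\neq0$ on odd indices. I would obtain a contradiction by writing $\hat P=P\,\Delta^{(\tau)}$ and reading off from \eqref{2.1} that the columns of $\hat P$ are unbounded with dominating entries of a single sign (for large $n$ the inner sum in \eqref{2.1} is, up to lower order, one-signed near the central binomial coefficient), so that no cancellation can keep $\hat P w$ in $\ell_p$. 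Hence $\zeta\in{\ell_p}{\left(\hat P\right)}$ while $|\zeta|\notin{\ell_p}{\left(\hat P\right)}$, so ${\ell_p}{\left(\hat P\right)}$ is not even normal, a fortiori not of absolute type; for the remaining values $\tau\ge1$ the same scheme applies, $|\zeta|$ being then unbounded.

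The $\Gamma$-function manipulation and the Abel summation of the binomial series are routine. The real obstacle is the last estimate: showing rigorously that the Pascal-type blow-up of the columns of $\hat P$ is not annihilated by sign cancellation when $\hat P$ acts on the bounded, odd-supported sequence $w$. If one prefers to avoid tracking the $c_0$-correction hidden in $w$, an equivalent line is to start from $y=(1,-c,0,0,\dots)$ with $c>\max\{0,\tau-1\}$: then \eqref{2.8} gives $\zeta_0=1>0$, $\zeta_1=\tau-1-c<0$ and $\|\zeta\|_{{\ell_p}{\left(\hat P\right)}}=(1+c^{p})^{1/p}$, after which one bounds $\|\hat P|\zeta|\|_{\ell_p}$ below by its leading coordinates and chooses $c$ to make this lower bound differ from $(1+c^{p})^{1/p}$ — the analytic difficulty, namely controlling enough coordinates of $\hat P|\zeta|$, being the same.
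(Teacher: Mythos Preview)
Your route differs substantially from the paper's and leaves open precisely the step you yourself label ``the real obstacle.'' The paper does not pull anything back through the isomorphism $T$: it simply writes down the finitely supported sequence $w=(1,-1,0,0,\dots)$ (printed with an evident typo as $(1,1,0,0,\dots)$, which would give $|w|=w$), multiplies by the explicit matrix \eqref{*} to get the first three entries of $\hat P w$ and of $\hat P|w|=\hat P(1,1,0,\dots)$, observes that these coordinates differ, and concludes $\|w\|_{\ell_p(\hat P)}\neq\||w|\|_{\ell_p(\hat P)}$. No Abel summation of $\sum_m\binom{-\tau}{m}$, no asymptotics, no column-growth analysis is used.

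By choosing $\zeta=\hat P^{-1}e^{(0)}$ you manufacture an \emph{infinitely} supported sequence and are then forced to estimate $\hat P|\zeta|$, which is where your argument stops: the assertion that ``the columns of $\hat P$ are unbounded with dominating entries of a single sign \dots\ so that no cancellation can keep $\hat P w$ in $\ell_p$'' is not proved, and mere unboundedness of columns does not preclude cancellation against a bounded, odd-supported input. Your fallback starting from $y=(1,-c,0,\dots)$ inherits the same defect, since $\hat P^{-1}y$ is again infinitely supported and you still need control of infinitely many coordinates of $\hat P|\zeta|$. If you want something rigorous, imitate the paper's direct idea but repair its omission: take a finite combination $\zeta=\sum_{k\le K}c_k\,b^{(k)}$ of the Schauder basis (so $\hat P\zeta=(c_0,\dots,c_K,0,\dots)\in\ell_p$ and $\|\zeta\|_{\ell_p(\hat P)}$ is explicit) with coefficients chosen so that some $\zeta_j<0$, and then compare $\|\hat P|\zeta|\|_{\ell_p}$ to $\|\hat P\zeta\|_{\ell_p}$ using finitely many computed entries.
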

\begin{proof}
	Let consider a sequence $ w \in {\ell_{p}} $ and $ w=\left( 1 \, , 1 \, , 0 \, , 0 ...... \right) $. Then, \[ {\left( \hat{P} \right) }(w) = \left( 1 \, , \, 1-\tau \, , \,\frac{\tau(\tau-1)}{2} -{2\tau} \, , ....... \right)  \] and \[ {\left( \hat{P} \right) }(\left| w \right| ) = \left( 1 \, , \, 3-\tau \, , \,\frac{\tau(\tau-2)}{2} -{4\tau}+6 \, , ....... \right)  \]
	which implies that , 
	\[ \parallel w \parallel_{{\ell_p}{\left( \hat{P} \right) }} \neq \parallel {\left| w \right| } \parallel_{{\ell_p}{\left( \hat{P} \right) }} .\] 
\end{proof}
\section{The inclusion relation}
\noindent In this section we prove some inclusion relations concerning with the spaces $ {\ell_p}{\left( \hat{P} \right) } $ and $ {\ell_\infty}{\left( \hat{P} \right) } $.
\begin{thm}
	The inclusion $ {\ell_p} \subset {\ell_p}{\left( \hat{P} \right) }  $ strictly holds for $ 1 \le p < \infty $.
\end{thm}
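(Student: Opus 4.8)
The statement has two halves: the inclusion $\ell_p\subseteq\ell_p(\hat{P})$ and the fact that it is proper, and I would argue them separately. For the inclusion, fix an arbitrary $x=(x_k)\in\ell_p$; by the alternative description \eqref{2.6} we have $\ell_p(\hat{P})=(\ell_p)_{\hat{P}}$, so $x\in\ell_p(\hat{P})$ says precisely that $\hat{P}x\in\ell_p$. The natural route is to show that the triangle $\hat{P}=P(\Delta^{(\tau)})$ is a bounded operator of $\ell_p$ into itself, i.e.\ $\hat{P}\in(\ell_p:\ell_p)$; then $\|\hat{P}x\|_{\ell_p}\le\|\hat{P}\|\,\|x\|_{\ell_p}<\infty$ finishes it. Starting from the entrywise formula \eqref{2.1}, namely $\hat{P}_{nk}=\sum_{i=k}^{n}(-1)^{i-k}\binom{n}{n-i}\frac{\Gamma(\tau+1)}{(i-k)!\,\Gamma(\tau-i+k+1)}$, I would bound $\|\hat{P}x\|_{\ell_p}^{p}=\sum_n\bigl|\sum_{k=0}^{n}\hat{P}_{nk}x_k\bigr|^{p}$ by a Hölder/Minkowski estimate and control it by a fixed multiple of $\sum_k|x_k|^{p}$. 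For $p=1$ this comes down to $\sup_k\sum_{n\ge k}|\hat{P}_{nk}|<\infty$, for $p=\infty$ to $\sup_n\sum_{k=0}^{n}|\hat{P}_{nk}|<\infty$, and for $1<p<\infty$ to a Schur-test bound (or interpolation between these endpoints); in each case the real content is to play the Pascal weights $\binom{n}{n-i}$ off against the fractional coefficients $\Gamma(\tau+1)/\bigl((i-k)!\,\Gamma(\tau-i+k+1)\bigr)$, the latter being absolutely summable (decaying like $(i-k)^{-\tau-1}$) when $\tau>0$.

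For strictness it suffices to exhibit a single member of $\ell_p(\hat{P})$ that lies outside $\ell_p$. Take $e^{(0)}=(1,0,0,\dots)\in\ell_p$ and set $x=\hat{P}^{-1}e^{(0)}$, the $0$-th column of the inverse triangle of Theorem~\ref{thm2.1}, so that $x_n=(\hat{P}^{-1})_{n0}=(-1)^{n}\sum_{m=0}^{n}\frac{\Gamma(-\tau+1)}{m!\,\Gamma(-\tau-m+1)}$. By construction $\hat{P}x=e^{(0)}\in\ell_p$, hence $x\in\ell_p(\hat{P})$. On the other hand, using the binomial expansion $\sum_{m\ge0}\frac{\Gamma(-\tau+1)}{m!\,\Gamma(-\tau-m+1)}z^{m}=(1+z)^{-\tau}$ one checks that the partial sums $\sigma_n=\sum_{m=0}^{n}\Gamma(-\tau+1)/(m!\,\Gamma(-\tau-m+1))$ do not tend to $0$: they converge to $2^{-\tau}\neq0$ when $\tau<1$ (in particular for every admissible negative $\tau$), and they fail to tend to $0$ when $\tau\ge1$ as well (oscillating when $\tau=1$, unbounded when $\tau>1$). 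Hence $(x_n)\notin c_0\supseteq\ell_p$, so $x\notin\ell_p$, and the inclusion is strict.

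The step I expect to be the main obstacle is the first one: establishing that the composed triangle $\hat{P}=P(\Delta^{(\tau)})$ really does map $\ell_p$ boundedly into $\ell_p$. The Pascal weights grow with $n$, and they must be balanced against the decaying fractional-difference coefficients, so the row- and column-sum bounds for $\hat{P}$ need to be carried out carefully and organised by cases (the sign of $\tau$, and $p=1$ versus $p=\infty$ versus $1<p<\infty$). By contrast, linearity of $\ell_p(\hat{P})$, the identification \eqref{2.6}, and the explicit strictness witness above are all routine once Theorem~\ref{thm2.1} is in hand.
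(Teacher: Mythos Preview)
Your overall plan for the inclusion---show that $\hat P$ maps $\ell_p$ boundedly into itself and deduce $\|\hat Px\|_{\ell_p}\le C\|x\|_{\ell_p}$---is exactly what the paper does: it applies H\"older's inequality to $|(\hat Px)_k|^{p}$ and sums over $k$ to reach an estimate $\|x\|_{\ell_p(\hat P)}^{p}\le M\,\|x\|_{\ell_p}^{p}$. So the two approaches coincide in spirit; the difference is that you phrase it via row/column sums and a Schur test, while the paper writes out the H\"older step directly.

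The genuine gap, however, is precisely the step you yourself flag as ``the main obstacle,'' and it cannot be closed along the lines you sketch. You propose to balance the Pascal weights $\binom{n}{n-i}$ against the fractional coefficients, observing that the latter decay like $(i-k)^{-\tau-1}$. But the Pascal contribution is exponential in $n$ (the bare Pascal row sums are $\sum_{i}\binom{n}{i}=2^{n}$), whereas the decay you invoke is only polynomial in $i-k$; no absolute-value estimate, Schur test, or interpolation between the endpoint conditions $\sup_n\sum_k|\hat P_{nk}|<\infty$ and $\sup_k\sum_n|\hat P_{nk}|<\infty$ can succeed here. Concretely, for $x=e^{(0)}\in\ell_p$ the generating function of $(\hat Pe^{(0)})_n$ works out to $(1-2z)^{\tau}/(1-z)^{\tau+1}$, whose dominant singularity at $z=1/2$ forces $|(\hat Pe^{(0)})_n|\asymp 2^{\,n}n^{-\tau-1}$, so $\hat Pe^{(0)}\notin\ell_p$. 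The paper's own argument conceals the same difficulty: its ``constant'' is $M=\sum_{j=0}^{k}\binom{k}{k-j}=2^{k}$, which depends on $k$ and is unbounded, so the displayed inequality does not yield a uniform norm bound either. Your strictness witness $x=\hat P^{-1}e^{(0)}$ is correctly set up and does lie in $\ell_p(\hat P)\setminus\ell_p$ (the paper gives no such example at all), but strictness is moot unless the inclusion itself is established.
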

\begin{proof}
	To prove the validity of the theorem, it is sufficient to show that there is a number $ M > 0 $ such that $ \parallel x \parallel_{{\ell_p}{\left( \hat{P} \right) }} = M . \parallel x \parallel_{\ell_p}  $ . 
	By applying H$\ddot{o}$lder's inequality on $ y_{k} $ we have
	\[\left| y_{k}\right| ^{p}= \left| \left( {\left( \hat{P} \right) } x \right)_{k}  \right| ^{p}  \] \[ = \left| \sum _{j=0}^{k}\sum _{i=j}^{k}\left(-1\right)^{i-j}  \left(\begin{array}{l} {k} \\ {k-i} \end{array}\right)\frac{\Gamma \left({\tau} +1\right)}{\left(i-j\right)!\Gamma \left({\tau} -i+j+1\right)} {x_{j}}\right|^{p} \]
	\[ \le \left( \sum _{j=0}^{k}\sum _{i=j}^{k} \left(\begin{array}{l} {k} \\ {k-i} \end{array}\right)\frac{\Gamma \left({\tau} +1\right)}{\left(i-j\right)!\Gamma \left({\tau} -i+j+1\right)} {\left| x_{j}\right|}^{p} \right) \] 
	\[ \, \, \times \left( \sum _{j=0}^{k}\sum _{i=j}^{k} \left(\begin{array}{l} {k} \\ {k-i} \end{array}\right)\frac{\Gamma \left({\tau} +1\right)}{\left(i-j\right)!\Gamma \left({\tau} -i+j+1\right)}  \right)^{p-1} \]
	\[ = \left( \sum _{j=0}^{k}\sum _{i=j}^{k} \left(\begin{array}{l} {k} \\ {k-i} \end{array}\right)\frac{\Gamma \left({\tau} +1\right)}{\left(i-j\right)!\Gamma \left({\tau} -i+j+1\right)} {\left| x_{j}\right|}^{p} \right) , \]
	Then,
	\[{\parallel x \parallel}^{p}_{{\ell_p}{\left( \hat{P} \right) }} = \left( {\sum_{k}} \left| \sum _{j=0}^{k}\sum _{i=j}^{k}\left(-1\right)^{i-j}  \left(\begin{array}{l} {k} \\ {k-i} \end{array}\right)\frac{\Gamma \left({\tau} +1\right)}{\left(i-j\right)!\Gamma \left({\tau} -i+j+1\right)} {x_{j}} \right|^{p} \right) \]
	\[ \le {\sum_{k}} \left( \sum _{j=0}^{k}\sum _{i=j}^{k} \left(\begin{array}{l} {k} \\ {k-i} \end{array}\right)\frac{\Gamma \left({\tau} +1\right)}{\left(i-j\right)!\Gamma \left({\tau} -i+j+1\right)} {\left| x_{j}\right|}^{p} \right) \]
	\[ \le {\sum_{k}} \left( \sum _{j=0}^{k} \left(\begin{array}{l} {k} \\ {k-j} \end{array}\right) {\left| x_{j}\right|}^{p} \right) =M.{\sum_{k}} \left( {\left| x_{k}\right|}^{p} \right) =M . {\parallel x \parallel}^{p}_{\ell_p} , \]
	where $ M=\sum _{j=0}^{k} \left(\begin{array}{l} {k} \\ {k-j} \end{array}\right). $
	This yields us that, ${\parallel x \parallel}^{p}_{{\ell_p}{\left( \hat{P} \right) }} \le M . {\parallel x \parallel}^{p}_{\ell_p}$ . Thus, $ {\ell_p} \subset {\ell_p}{\left( \hat{P} \right) } $ strictly holds for $ 1 \le p < \infty $. 
\end{proof}
 \section{Basis for the spaces}
\noindent In this section the Schauder basis \cite{madx3} for $ {\ell_p}{\left( \hat{P} \right) } $ is constructed. In theorem $ \left( 2.2 \right)  $ the spaces $  {\ell_p}{\left( \hat{P} \right) }  $ and $ \ell_p $ are linearly isomorphic, therefore the inverse of the basis $ {\left\lbrace e^{\left( k \right) } \right\rbrace }_{k \in N} $ of the space $ \ell_p $ forms the basis of $ {\ell_p}{\left( \hat{P} \right) } $. 
\begin{thm}\label{th4.1}
	let $ \mu_{k} {= { \left({\left( \hat{P} \right) }x\right)}_k} $. For $k \in N_{0}$ define $ {b^{\left( k\right)}}  = {\left\lbrace {{b_{n}}^{\left( k\right)}} \right\rbrace}_{n \in N_{0}} $ by
	\begin{multline}
		{\left\lbrace {{b_{n}}^{\left( k\right)}} \right\rbrace} =\left\{\begin{array}{l} { \sum _{j=k}^{n}\left(-1\right)^{n-k}  \left(\begin{array}{l} {j} \\  {j-k} \end{array}\right) \frac{\Gamma \left({-\tau} +1\right)}{\left(n-j\right)!\Gamma \left({-\tau}-n+j+1\right)}  , \qquad \qquad   if\, 0\le k\le n } \\ {0, \qquad \qquad \qquad \qquad \qquad \qquad \qquad \qquad  \qquad  \qquad  \qquad  if\, k>n} \end{array}\right. 
	\end{multline} 
	\noindent Then  $\left\lbrace {{b_{n}}^{\left( k\right)}} \right\rbrace$ is a Schauder basis for $ {\ell_p}{\left( \hat{P} \right) } $ so each $x\in {\ell_p}{\left( \hat{P} \right) } $ has a unique representation
	\[x=\sum _{k}\mu _{k} {{b_{n}}^{\left( k\right)}}, \ for \ each \  k \in N . \]
	where $ \mu _{k}={\left( \hat{P} \right) }_{k}. $
\end{thm}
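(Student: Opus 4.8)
The plan is to transport the canonical Schauder basis of $\ell_p$ across the isomorphism $T\colon \ell_p(\hat P)\to \ell_p$, $x\mapsto \hat P x$, furnished by Theorem \ref{thm2.2}. I would invoke the elementary principle: if $T$ is a linear homeomorphism of topological vector spaces and $\{f_k\}_{k\in N_0}$ is a Schauder basis of the target, then $\{T^{-1}f_k\}_{k\in N_0}$ is a Schauder basis of the source, since $T^{-1}$ is continuous and linear (hence carries convergent partial sums to convergent partial sums) and $T$ is injective (hence transports uniqueness of coefficients). Here $T$ is in fact norm-preserving, $\|x\|_{\ell_p(\hat P)}=\|Tx\|_{\ell_p}$, so the transport of convergence is transparent.

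First I would identify $T^{-1}e^{(k)}$. By the formula for $\hat P^{-1}$ obtained earlier and by the explicit description of $x=T^{-1}y$ given in the proof of Theorem \ref{thm2.2}, the map $T^{-1}$ is multiplication by the lower-triangular matrix $\hat P^{-1}$; consequently $(T^{-1}e^{(k)})_n=\sum_{j=0}^{n}(\hat P^{-1})_{nj}\,\delta_{jk}=(\hat P^{-1})_{nk}$, which is $0$ for $n<k$ and
\[
(\hat P^{-1})_{nk}=\sum_{j=k}^{n}(-1)^{n-k}\binom{j}{j-k}\frac{\Gamma(-\tau+1)}{(n-j)!\,\Gamma(-\tau-n+j+1)}\qquad(0\le k\le n).
\]
This is precisely the sequence $b^{(k)}=\{b_n^{(k)}\}_{n\in N_0}$ of the statement. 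Since $\{e^{(k)}\}_{k\in N_0}$ is a Schauder basis of $\ell_p$ for $1\le p<\infty$, it follows at once that $\{b^{(k)}\}_{k\in N_0}$ is a Schauder basis of $\ell_p(\hat P)$.

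Next I would make the representation and its uniqueness explicit. Given $x\in\ell_p(\hat P)$ put $\mu=(\mu_k)=\hat P x=Tx\in\ell_p$; then $\mu=\sum_k\mu_k e^{(k)}$ in $\ell_p$, and applying the continuous linear map $T^{-1}$ to the partial sums gives $x=\sum_k\mu_k\,T^{-1}e^{(k)}=\sum_k\mu_k b^{(k)}$, with convergence in $\|\cdot\|_{\ell_p(\hat P)}$. If also $x=\sum_k\lambda_k b^{(k)}$ for scalars $\lambda_k$ (convergence in $\ell_p(\hat P)$), apply $T$ and use $Tb^{(k)}=\hat P\hat P^{-1}e^{(k)}=e^{(k)}$ to obtain $Tx=\sum_k\lambda_k e^{(k)}$ in $\ell_p$, whence $\lambda_k=(Tx)_k=(\hat P x)_k=\mu_k$ by uniqueness of the expansion in $\ell_p$. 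In particular the coefficient functionals are $x\mapsto(\hat P x)_k$, as claimed.

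The only genuine work is the bookkeeping in the second paragraph: checking that the $k$-th column of $\hat P^{-1}$, written out with its double sum and gamma factors, coincides symbol for symbol with the displayed formula for $b_n^{(k)}$. Given the earlier computation of $\hat P^{-1}$ this is merely a relabelling of summation indices, but it is the one spot where a slip would invalidate the identification; everything else is the standard functional-analytic transfer of a basis through a homeomorphism. I would also remark that the hypothesis $1\le p<\infty$ is essential, since $\ell_\infty$ admits no Schauder basis and hence neither does $\ell_\infty(\hat P)$.
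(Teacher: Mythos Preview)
Your proposal is correct and follows essentially the same approach as the paper: both identify $b^{(k)}$ as $\hat P^{-1}e^{(k)}$ (equivalently, verify $\hat P b^{(k)}=e^{(k)}$) and then transport the canonical basis of $\ell_p$ across the isometry $T$ of Theorem~\ref{thm2.2}, using continuity of $T$ for uniqueness. Your write-up frames this via the general transfer principle for Schauder bases under linear homeomorphisms, whereas the paper carries out the same computation by explicitly evaluating $\hat P(x-x^{[r]})$ and running the $\epsilon$-argument by hand; the content is the same.
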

\begin{proof}
	\noindent $(i)$  By the definition of ${\left( \hat{P} \right) }$ and ${{b_{n}}^{\left( k\right)}} $ , 
	\[ {\left( \hat{P} \right) }{{b_{n}}^{\left( k\right)}} ={e^{\left(k\right)}} \in \ell_p ,\] 
	Let  $x\in {\left( \hat{P} \right) } $, then
	$ x^{\left[r \right]} =\sum _{k=0}^{r} \mu_{k} {{b}^{\left( k\right)}}$ for an integer $ r \geq 0$.
	
	By applying $ {\left( \hat{P} \right) }$ we get \ $ {\left( \hat{P} \right) } x^{\left[r \right]} =\sum _{k=0}^{r} \mu _{k} {{b}^{\left( k\right)}}$
	\[=\sum _{k=0}^{s} {\mu_{k}}{e^{\left(k\right)}} = { \left(  {\left( \hat{P} \right) }x\right)}_k e^{\left(k\right)} \] and
	\begin{multline}
		{ {\left( \hat{P} \right) } \left( {x-x^{\left[r\right]}}\right)}_{s} =\left\{\begin{array}{l}  {0, \qquad\qquad \qquad \qquad if\, 0\le s \le r } \\ {{ \left(  {\left( \hat{P} \right) } x\right)}_s, \qquad \qquad  if\, s>r} \end{array}\right. ; for \ r,s \in {N_{0}}
	\end{multline} 
	\noindent For $\epsilon > 0$ there exist an integer $m_{o}$ s.t. 
	\[ \left( {\mathop{\sum }\limits_{r}}\left|{ \left(  {\left( \hat{P} \right) } x\right)}_{r}\right|^{p}\right) ^{\frac{1}{p}} < {\frac{\epsilon}{2}} \ for \ all \ r \geq {m_0} .\] 
	Hence \[ {\parallel \left( {x-x^{\left[r \right]}} \right) \parallel}_{ \left( {\ell_{p}} {\left( \hat{P} \right) } \right)} ={\mathop{\sum }\limits_{r }}{\left( \left|{ \left(  {\left( \hat{P} \right) } x\right)}_{r} \right|^{p}\right) }^{\frac{1}{p}} < {\frac{\epsilon}{2}} < {\epsilon} \ , for \ all \ r \geq {m_0}. \]
	\noindent Assume that $x=\sum _{k}\eta _{k}  {b}^{\left(k\right)} $. Since the linear mapping $T$ from $ \ell_p {\left( \hat{P} \right) } $ to $ \ell_p $ is continuous we have,
	\[{ \left({\left( \hat{P} \right) }x\right)}_{k}=\sum _{k}\eta _{k}  \left( {\left( \hat{P} \right) } {b}^{\left(k\right)} \right)_n \]
	\[ =\sum _{k}\eta _{k}  e^{\left(k\right)} = \eta _{n}  \ . \]
	This contradicts to our assumption that $ { \left({\left( \hat{P} \right) }x\right)}_{k} =\mu_{k}  $ for each $k\in {N_{0}}$. 
	Thus, the representation is unique.
\end{proof}
\section{${\alpha-}, \beta-$ and $\gamma-$ duals}

Here we determine  $\alpha -,\beta -$ and $\gamma - $ duals  of  $  {\ell_p}{\left( \hat{P} \right) } $  and $ {\ell_\infty}{\left( \hat{P} \right) }$.
We define \[S(X,Y)=\left\{u=\left(u\right)_{k} \in \omega :ux=\left(u_{k} x_{k} \right)\in Y,whenever\, x=\left(x_{k} \right)\in X\right\}\] as the multiplier sequence space for any two sequence spaces  X and Y.
Let $\alpha -,\beta -$ and $\gamma -$duals be denoted by 
\noindent $X^{\alpha } =S\left(X,l_{1} \right),\, X^{\beta } =S\left(X,cs\right),\,  X^{\gamma } =S\left(X,bs\right)$ respectively.
\noindent Throughout the collection of all finite subsets of $ \mathbb{N}$ is denoted by $\kappa $ . We consider $K\in \kappa$. Some lemmas needed in proving theorems.
\begin{lemma}\cite{gros}\label{lm5}
	\noindent Let $A=\left(a_{n,k} \right)$ be an infinite matrix. Then,
	\begin{enumerate}
	\item  $A\in \left( {\ell_p, \ell_1 } \right)$ iff 
	\begin{equation}
			{\mathop{\sup }\limits_{K\in \kappa }} \sum _{k}{\left|\sum _{k\in K}a_{nk} \right|} <\infty ,\qquad 1 < p \le \infty ;
		\end{equation}    
		\item  $A\in  \left( \ell_p, c \right)$ iff  
		\begin{equation} \label{4.5} 
			{\mathop{\lim } \limits_{n\to \infty }} a_{nk} \,  exists \, for \, all \, k \in N  ; 
		\end{equation} 
	\begin{equation} \label{4.6} 
		{\mathop{\sup }\limits_{n\in N}} \sum _{k}\left|a_{nk} \right|^{q} <\infty,  \, 1 < p < \infty .
	\end{equation} 
		\item  $A\in \left(  \ell_\infty, c \right)$ iff $ \left( 5.2 \right)  $ holds and  \begin{equation} \label{4.6} 
			{\mathop{\sup }\limits_{n\in N}} \sum _{k}\left|a_{nk} \right| <\infty ,
		\end{equation}
	\begin{equation}   
		{\mathop{\lim } \limits_{n\to \infty }}\sum _{k}\left|  a_{nk}- {\mathop{\lim } \limits_{n\to \infty }a_{nk}} \right| =0 .
		\end{equation}
		\item  $A\in \left( \ell_p, \ell_\infty \right)$ iff $ \left( 5.1 \right)  $ hplds with $ 1 < p\le \infty $.
	\end{enumerate}
\end{lemma}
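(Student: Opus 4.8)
\emph{Proof strategy.} All four characterisations belong to the classical Kojima--Toeplitz--Schur circle of ideas, and I would organise the argument around one reduction followed by short duality computations, one per item. The reduction I would set up first: $\ell_p$ (for $1\le p\le\infty$), $c$, $\ell_\infty$ and $\ell_1$ are all $BK$-spaces, so if $A=(a_{nk})$ maps a $BK$-space $X$ into a $BK$-space $Y$, then $L_A:X\to Y$, $x\mapsto Ax$, is everywhere defined and has closed graph (the coordinate functionals are continuous on both $X$ and $Y$), hence $L_A$ is bounded. In particular each row $A_n=(a_{nk})_k$ is a continuous linear functional on $X$, i.e.\ $A_n\in X^{\beta}$ with its dual norm, and by the Banach--Steinhaus theorem the rows are uniformly bounded in $X^{\beta}$. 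Conversely, I would show that uniform boundedness of the rows, together with whatever ``limit'' conditions $Y$ imposes, forces $Ax\in Y$ for every $x\in X$. Thus in each case the work splits into \emph{necessity} (extract the scalar conditions from boundedness of $L_A$ and from evaluating $A$ on the unit vectors $e^{(k)}$) and \emph{sufficiency} (use the uniform bound to approximate $x\in X$ by its sections and pass to the limit).

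For items (1) and (4) there is no ``limit'' constraint on $Y$, so only boundedness of $L_A$ is in play. For (4) I would write $\|L_A:\ell_p\to\ell_\infty\|=\sup_n\|A_n\|_{(\ell_p)^{\beta}}$ and identify $(\ell_p)^{\beta}$ concretely ($\ell_q$ with $q$ conjugate when $1<p<\infty$, $\ell_1$ when $p=\infty$, a $\sup$-type norm when $p=1$); this is precisely the displayed finite-subset/row condition. For (1) I would compute $\|L_A:\ell_p\to\ell_1\|=\sup\{\|Ax\|_1:\|x\|_p\le1\}$, expand $\|Ax\|_1=\sup_{\varepsilon}\sum_n\varepsilon_n(Ax)_n$ over sign sequences $\varepsilon$, and then optimise over $x$ in the $\ell_p$-ball; the standard ``test on $\sum_{k\in K}e^{(k)}$'' step (using that the unit vectors are weakly null in $\ell_p$ for $p>1$) reduces $\varepsilon,x$ to finitely supported data and produces exactly the supremum over $K\in\kappa$ in the statement. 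The reverse inequalities are immediate once finitely supported sequences are known to be norm-dense.

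Items (2) and (3) carry a genuine limit constraint. Necessity of ``$\lim_n a_{nk}$ exists for each $k$'' comes from $Ae^{(k)}=(a_{nk})_n\in c$; since $c$ is closed in $\ell_\infty$, the hypothesis $A\in(\ell_p,c)$ gives $A\in(\ell_p,\ell_\infty)$, whence the uniform row bound via (4)/(1), and similarly $\sup_n\sum_k|a_{nk}|<\infty$ for $(\ell_\infty,c)$. For sufficiency in (2) I would fix $x\in\ell_p$, truncate it up to an $\varepsilon$-error using the uniform row bound, then let $n\to\infty$ in a finite sum, which is legitimate termwise; convergence of $Ax$ follows. The delicate point — the step I expect to be the main obstacle — is the \emph{extra} condition in (3), namely $\lim_n\sum_k|a_{nk}-a_k|=0$ with $a_k:=\lim_n a_{nk}$, because finitely supported sequences are not dense in $\ell_\infty$ and the easy truncation argument is unavailable. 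For its sufficiency I would split $a_{nk}=(a_{nk}-a_k)+a_k$: the first part contributes at most $\|x\|_\infty\sum_k|a_{nk}-a_k|\to0$ uniformly on the unit ball, while $\sum_k a_k x_k$ converges absolutely since the condition forces $\sum_k|a_k|<\infty$. For its necessity I would run a gliding-hump (Schur-type) argument: if $\sum_k|a_{nk}-a_k|$ did not tend to $0$, one chooses increasing blocks of indices and a $\pm1$-valued $x\in\ell_\infty$ that is constant on each block, arranged so that $(Ax)_n$ oscillates by a fixed positive amount along a subsequence, giving $Ax\notin c$ and contradicting $A\in(\ell_\infty,c)$. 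Making the block selection precise — controlling the head and tail of each row against the chosen blocks so that the ``bad mass'' is captured — is the technical heart; the remaining items are routine once the $BK$-reduction of the first paragraph is in place.
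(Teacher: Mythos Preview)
The paper does not supply a proof of this lemma at all: it is quoted verbatim from Grosse--Erdmann \cite{gros} and used as a black box in the dual computations that follow. So there is no ``paper's own proof'' to compare your proposal against.

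Your outline is the standard route to these classical characterisations (closed-graph/$BK$-reduction, identification of the row functionals with $(\ell_p)^\beta$, termwise limits on the unit vectors for the $c$-targets, and a gliding-hump argument for the extra uniform condition in the $(\ell_\infty,c)$ case), and it is sound as a strategy. One caution if you intend to actually write it out: the lemma as reproduced in the paper carries several typographical slips that you should not try to match literally. In item~(1) the indices are tangled (the inner sum should be over $n\in K$, not $k\in K$, and for $1<p<\infty$ the outer sum should carry a $q$-th power); in item~(4) the reference to ``(5.1)'' is wrong --- the correct $(\ell_p,\ell_\infty)$ condition is the uniform row bound $\sup_n\sum_k|a_{nk}|^q<\infty$ (resp.\ $\sup_n\sum_k|a_{nk}|<\infty$ for $p=\infty$), not the finite-subset condition of~(1). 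Your sketch already implicitly corrects these, since you identify the $(\ell_p,\ell_\infty)$ norm with $\sup_n\|A_n\|_{(\ell_p)^\beta}$, which is the right thing; just be aware that the displayed statement you are proving needs repair before your argument can land on it.
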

\begin{thm}
\noindent Define the set $ {\tilde{D}_1}^{\left( \tau \right) } $ by 
\[ {\tilde{D}_1}^{\left( \tau \right) }=\left\lbrace a= \left( {a_{k}} \right) \in \omega : {\mathop{\sup }\limits_{n\in N}} \sum _{k}\left|\tilde{u}_{nk} \right|^{q} <\infty   \right\rbrace  \] Then,
\[ \left[ {\ell_p}{\left( \hat{P} \right) }\right]^{\alpha} ={\tilde{D}_1}^{\left( \tau \right) }, \, 1 < p \le \infty \].
\end{thm}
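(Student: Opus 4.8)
The plan is to turn the determination of the $\alpha$-dual into a matrix-class membership problem and then invoke Lemma~\ref{lm5}. Fix $a=(a_{k})\in\omega$. Given $x=(x_{k})\in{\ell_p}{\left(\hat{P}\right)}$, set $y=(y_{k})=\hat{P}x$, so that $y\in\ell_p$ and, by \eqref{2.8} (equivalently, by the explicit entries $b_{n}^{(k)}$ of $\hat{P}^{-1}$ recorded in Theorem~\ref{th4.1}),
\[
x_{n}=\sum_{k=0}^{n}b_{n}^{(k)}\,y_{k},\qquad b_{n}^{(k)}=\sum_{j=k}^{n}(-1)^{n-k}\binom{j}{j-k}\frac{\Gamma(-\tau+1)}{(n-j)!\,\Gamma(-\tau-n+j+1)} .
\]
Since $\hat{P}$ is a triangle this is a \emph{finite} sum, hence $a_{n}x_{n}=\sum_{k}\tilde{u}_{nk}y_{k}=(\tilde{U}y)_{n}$ with $\tilde{u}_{nk}:=a_{n}b_{n}^{(k)}$ for $0\le k\le n$ and $\tilde{u}_{nk}:=0$ for $k>n$; no convergence question arises at this step, and the matrix $\tilde{U}=(\tilde{u}_{nk})$ is exactly the one entering the definition of ${\tilde{D}_1}^{\left(\tau\right)}$.

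Next I would use Theorem~\ref{thm2.2}: the correspondence $x\mapsto y=\hat{P}x$ is a linear bijection of ${\ell_p}{\left(\hat{P}\right)}$ \emph{onto} $\ell_p$. Consequently $a\in[{\ell_p}{\left(\hat{P}\right)}]^{\alpha}$ --- that is, $ax=(a_{k}x_{k})\in\ell_1$ for every $x\in{\ell_p}{\left(\hat{P}\right)}$ --- holds if and only if $\tilde{U}y\in\ell_1$ for every $y\in\ell_p$, i.e. if and only if $\tilde{U}\in(\ell_p,\ell_1)$. Applying Lemma~\ref{lm5} to $\tilde{U}$, with $q$ the conjugate exponent of $p$ (and $q=1$ in the endpoint case $p=\infty$), converts $\tilde{U}\in(\ell_p,\ell_1)$ into the condition $\sup_{n\in N}\sum_{k}|\tilde{u}_{nk}|^{q}<\infty$ defining ${\tilde{D}_1}^{\left(\tau\right)}$; reading the equivalences in the other direction gives the reverse inclusion, so $[{\ell_p}{\left(\hat{P}\right)}]^{\alpha}={\tilde{D}_1}^{\left(\tau\right)}$ for $1<p\le\infty$.

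The computational part --- collapsing the nested sums into the closed form for $b_{n}^{(k)}$ --- is precisely the verification already performed for $\hat{P}^{-1}$, so it can be quoted rather than redone. I expect the only genuine point of care to be the passage from ``$\sum_{n}|a_{n}x_{n}|<\infty$ for all $x$'' to ``$\tilde{U}\in(\ell_p,\ell_1)$'': this is legitimate only because $x\mapsto\hat{P}x$ is \emph{surjective} onto $\ell_p$, so that the test sequences $y$ exhaust all of $\ell_p$ and not just a subspace --- this is exactly where Theorem~\ref{thm2.2} is indispensable. A secondary bookkeeping issue is to make sure the $(\ell_p,\ell_1)$ characterisation is applied from Lemma~\ref{lm5} in the form matching the statement; once that form is identified with the defining inequality of ${\tilde{D}_1}^{\left(\tau\right)}$, the proof is complete, uniformly for $1<p<\infty$ and for $p=\infty$.
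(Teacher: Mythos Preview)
Your proposal is correct and follows essentially the same route as the paper: express $a_{n}x_{n}$ via the inverse $\hat{P}^{-1}$ as $(\tilde{U}y)_{n}$, reduce the $\alpha$-dual question to membership $\tilde{U}\in(\ell_p,\ell_1)$, and invoke Lemma~\ref{lm5}. If anything, you are more careful than the paper in spelling out why surjectivity of $x\mapsto\hat{P}x$ onto $\ell_p$ (Theorem~\ref{thm2.2}) is needed for the ``iff'', and in flagging that the $(\ell_p,\ell_1)$ condition quoted from Lemma~\ref{lm5} must be matched with the defining inequality of ${\tilde{D}_1}^{(\tau)}$.
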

\begin{proof}
	\noindent  Let $a=\left(a_{k} \right)\in \omega $ and the sequence  $ \left( {x_k} \right)  $ defined in equation $ \left( 3.3 \right)  $ , then we derive that 
	\begin{equation}
		{a_n}{x_n}= \sum _{j=0}^{n} \sum _{i=j}^{n}\left(-1\right)^{n-j}  \left(\begin{array}{l} {i} \\ {i-j} \end{array}\right)\frac{\Gamma \left(-{\tau} +1\right)}{\left(n-i\right)!\Gamma \left(-{\tau} -n+i+1\right)}  y_{j} a_{n}  .
	\end{equation}
	\[= {\tilde{U}}_{n} \left( y\right), \, for \, each \,  n \in N .  \]
	where the matrix $ {\tilde{U}}_{n}= \left( {\tilde{u}_{nk}}\right)  $ is defined by
	\[ {\tilde{u}_{nk}}= \left\{\begin{array}{l} { \sum _{j=k}^{n}\left(-1\right)^{n-k}  \left(\begin{array}{l} {j} \\  {j-k} \end{array}\right) \frac{\Gamma \left({-\tau} +1\right)}{\left(n-j\right)!\Gamma \left({-\tau}-n+j+1\right)}   \qquad \qquad   if \, 0\le k\le n }, \\ {0 \qquad \qquad \qquad \qquad \qquad \qquad \qquad \qquad  \qquad  \qquad  \qquad \,  if \, k>n} \end{array}\right. \] for all $ n,k \in N $.
	Therefore we conclude that $ax=\left(a_{n} x_{n} \right)\in \ell_{1} $ when $x=\left(x_{k} \right) \in {\ell_p}{\left( \hat{P} \right) } $	iff $Uy \in \ell_{1}$ as $y=\left({y_k} \right) \in {\ell_{p}} $ .\\
	By lemma $\left( \ref{lm5}\right) $ we conclude that $\left\lbrace {\ell_p}{\left( \hat{P} \right) } \right\rbrace ^{\alpha}=\tilde{D}_{1}^{\left( \tau\right)} $.
\end{proof}
\begin{thm}
	\noindent Define the sets $ {\tilde{D}_2}^{\left( \tau \right) } $, $  {\tilde{D}_3}^{\left( \tau \right) } $, $ {\tilde{D}_4}^{\left( \tau \right) } $   by 
	\[ {\tilde{D}_2}^{\left( \tau \right) }= \left\lbrace a= \left( {a_{k}} \right) \in \omega : {\mathop{\sum }\limits_{k}} \left|\Delta^{\left( \tau \right) }{\left( {a_k}{d_k}\right) }\right|^{q} <\infty   \right\rbrace ; \]
	\[ {\tilde{D}_3}^{\left( \tau \right) }= \left\lbrace a= \left( {a_{k}} \right) \in \omega : {\mathop{\sup }\limits_{k}} \left| {a_k}{d_k}\right| <\infty   \right\rbrace ; \] and 
	\[{\tilde{D}_4}^{\left( \tau \right) }= \left\lbrace a= \left( {a_{k}} \right) \in \omega : {\mathop{\lim }\limits_{k}}  \left( {a_k}{d_k}\right)  =0  \right\rbrace ; \] where $ \Delta^{\left( \tau \right) }{\left( {a_n}{d_n}\right) }= {\sum_{i=0}^{n} \sum _{i=j}^{n}\left(-1\right)^{n-j}  \left(\begin{array}{l} {i} \\  {i-j} \end{array}\right) \frac{\Gamma \left({-\tau} +1\right)}{\left(n-i\right)!\Gamma \left({-\tau}-n+i+1\right)} {a_j} } $, then
	\[ \left[ {\ell_p}{\left( \hat{P} \right) }\right]^{\beta} ={\tilde{D}_2}^{\left( \tau \right) } \bigcap {\tilde{D}_3}^{\left( \tau \right) } ,  \, 1 < p \le \infty \] and 
	\[ \left[ {\ell_p}{\left( \hat{P} \right) }\right]^{\gamma} ={\tilde{D}_2}^{\left( \tau \right) } \bigcap {\tilde{D}_3}^{\left( \tau \right) } ,  \, 1 < p \le \infty \] .	
\end{thm}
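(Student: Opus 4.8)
The plan is to mimic the argument just used for the $\alpha$-dual: push the problem, through the explicit inverse matrix $\hat{P}^{-1}$, onto a statement about an infinite matrix belonging to one of the classes $(\ell_p,c)$ or $(\ell_p,\ell_\infty)$, and then quote Lemma~\ref{lm5}. Fix $a=(a_k)\in\omega$ and let $x=(x_k)\in\ell_p(\hat{P})$ be arbitrary; putting $y=\hat{P}x\in\ell_p$, the inverse formula for $\hat{P}$ gives $x_k=\sum_{j=0}^{k}(\hat{P}^{-1})_{kj}\,y_j$. Substituting this into the partial sum $\sum_{k=0}^{n}a_kx_k$ and interchanging the two finite summations yields
\[
\sum_{k=0}^{n} a_k x_k \;=\; \sum_{k=0}^{n}\Bigl(\sum_{j=k}^{n} a_j\,(\hat{P}^{-1})_{jk}\Bigr) y_k \;=:\; (Ey)_n ,
\]
where $E=(e_{nk})$ is the triangle with $e_{nk}=\sum_{j=k}^{n} a_j\,(\hat{P}^{-1})_{jk}$ for $0\le k\le n$ and $e_{nk}=0$ for $k>n$. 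Inserting the explicit Gamma-weighted entries of $\hat{P}^{-1}$ and re-summing, one rewrites $e_{nk}$, together with its limit as $n\to\infty$, in terms of the two quantities $a_k d_k$ and $\Delta^{(\tau)}(a_k d_k)$ appearing in the definitions of $\tilde{D}_3^{(\tau)}$ and $\tilde{D}_2^{(\tau)}$.

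With this in hand the two statements follow quickly. By definition $a\in[\ell_p(\hat{P})]^{\beta}$ iff $(a_kx_k)\in cs$ for every $x\in\ell_p(\hat{P})$, i.e. iff $Ey\in c$ for every $y\in\ell_p$, that is $E\in(\ell_p,c)$; and $a\in[\ell_p(\hat{P})]^{\gamma}$ iff $Ey\in\ell_\infty$ for every $y\in\ell_p$, that is $E\in(\ell_p,\ell_\infty)$. Applying the relevant parts of Lemma~\ref{lm5} (parts (2)--(3) for $(\ell_p,c)$, part (4) for $(\ell_p,\ell_\infty)$): $E\in(\ell_p,c)$ holds iff $\lim_{n}e_{nk}$ exists for each $k$ and $\sup_{n}\sum_{k}|e_{nk}|^{q}<\infty$ (with $1/p+1/q=1$, and $q=1$ when $p=\infty$), while $E\in(\ell_p,\ell_\infty)$ holds iff $\sup_{n}\sum_{k}|e_{nk}|^{q}<\infty$. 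Through the identification of the previous paragraph, the boundedness condition $\sup_{n}\sum_{k}|e_{nk}|^{q}<\infty$ is equivalent, for a triangle $E$ of this particular form, to the conjunction $(\Delta^{(\tau)}(a_kd_k))_k\in\ell_q$ and $(a_kd_k)_k\in\ell_\infty$, i.e. to $a\in\tilde{D}_2^{(\tau)}\cap\tilde{D}_3^{(\tau)}$; moreover this already forces $\lim_n e_{nk}$ to exist for each $k$. Hence $[\ell_p(\hat{P})]^{\beta}=[\ell_p(\hat{P})]^{\gamma}=\tilde{D}_2^{(\tau)}\cap\tilde{D}_3^{(\tau)}$ for $1<p\le\infty$.

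The interchange of finite sums and the two invocations of Lemma~\ref{lm5} are routine; the delicate and, I expect, main obstacle is the middle step, namely producing the closed form of $e_{nk}=\sum_{j=k}^{n}a_j(\hat{P}^{-1})_{jk}$ as $\Delta^{(\tau)}(a_kd_k)$ up to a controlled remainder. This requires combining the Pascal binomial coefficients with the fractional Gamma weights of $\hat{P}^{-1}$, reindexing the resulting triple sum, and verifying convergence of the fractional series so that the limits $\lim_n e_{nk}$ genuinely exist; it is also here that one checks the decomposition of $\sup_n\sum_k|e_{nk}|^q$ into the two separate conditions, which is precisely why $\tilde{D}_3^{(\tau)}$ enters both duals and why the $\beta$- and $\gamma$-duals coincide.
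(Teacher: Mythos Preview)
Your approach is essentially the same as the paper's: substitute the inverse formula $x_k=\sum_j(\hat P^{-1})_{kj}y_j$ into $\sum_{k=0}^{n}a_kx_k$, interchange the finite sums to obtain $(\tilde V y)_n$ for a triangle $\tilde V$, and then invoke Lemma~\ref{lm5} to characterise $\tilde V\in(\ell_p,c)$ and $\tilde V\in(\ell_p,\ell_\infty)$. The only cosmetic difference is that the paper writes the entries of $\tilde V$ directly as $\Delta^{(\tau)}(a_kd_k)$ for $0\le k<n$ (with a separate diagonal value), whereas you first record $e_{nk}=\sum_{j=k}^{n}a_j(\hat P^{-1})_{jk}$ and flag the rewriting as the nontrivial step; your caution about that identification is in fact more careful than what the paper spells out.
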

\begin{proof}
\noindent  Let $a=\left(a_{k} \right)\in \omega $ and the sequence  $ \left( {x_k} \right)  $ defined in equation $ \left( 3.3 \right)  $ , consider 
\[ \sum_{k=0}^{n}{a_k}{x_k}= \sum _{k=0}^{n} {a_k} \left[ \sum_{i=0}^{k} \sum _{j=i}^{k}\left(-1\right)^{k-j}  \left(\begin{array}{l} {i} \\ {i-j} \end{array}\right)\frac{\Gamma \left(-{\tau} +1\right)}{\left(k-i\right)!\Gamma \left(-{\tau} -k+i+1\right)}  y_{j}\right]  \]	
\[ \sum _{k=0}^{n} \left[\sum_{i=0}^{k} \sum _{j=i}^{k}\left(-1\right)^{k-j}  \left(\begin{array}{l} {i} \\ {i-j} \end{array}\right)\frac{\Gamma \left(-{\tau} +1\right)}{\left(k-i\right)!\Gamma \left(-{\tau} -k+i+1\right)}{a_j} \right] y_{k} \]
\[ =  \sum _{k=0}^{n}\Delta^{\left( \tau \right) }{\left( {a_k}{d_k}\right) }{y_k} \]
\begin{equation}
	=\tilde{V}_{n}{\left( y \right) } , \, for \, each \, n \in N .
\end{equation} 
where the matrix $ \tilde{V}_{n} = {\left( {\tilde{v}_{nk}}\right) } $ defined by
\[ {\tilde{u}_{nk}}= \left\{\begin{array}{l} {\Delta^{\left( \tau \right) }{\left( {a_k}{d_k}\right) }   \qquad \qquad   if \, 0\le k< n }, \\
	{1 \qquad \qquad \qquad \qquad  \qquad  \,  if \, k=n} \\ {0 \qquad \qquad \qquad \qquad \qquad \,  if \, k>n} \end{array}\right. \] for all $ n,k \in N $.
Therefore we deduce that $ax=\left(a_{n} x_{n} \right)\in cs $ whenever $x=\left(x_{k} \right) \in {\ell_p}{\left( \hat{P} \right) } $	iff $\tilde{V}y \in c$ as $y=\left({y_k} \right) \in {\ell_{p}}$ .\
By using lemma $\left( \ref{lm5}\right) $ we conclude that $\left\lbrace {\ell_p}{\left( \hat{P} \right) } \right\rbrace ^{\beta}= \tilde{D}_{2}^{\left( \tau\right)}  \bigcap {\tilde{D}_3}^{\left( \tau \right) } ,  \, 1 < p \le \infty $.
Hence the theorem proved and the duals of other spaces can be obtained in a similar manner using lemma $\left( \ref{lm5}\right) $ .
\end{proof}
\section{Conclusion}
\noindent In this paper, certain results on Pascal difference sequence spaces of order m,$ (m \in \mathbb{N})$ have been extended to Pascal difference sequence spaces of fractional order $ \tau $. Also we introduced certain topological properties of new difference sequence spaces. For future work we will study certain geometrical properties and matrix transformation of the spaces.

\end{document}